\renewenvironment{abstract}
{\small\vspace{-1em}
\begin{center}
\bfseries\abstractname\vspace{-.5em}\vspace{0pt}
\end{center}
\list{}{
\setlength{\leftmargin}{0.6in}\setlength{\rightmargin}{\leftmargin}}\item\relax}
{\endlist}
\declaretheorem[name=Theorem, numberwithin=section]{theorem}
\declaretheorem[name=Lemma, sibling=theorem]{lemma}
\declaretheorem[name=Claim, sibling=theorem]{claim}
\def\cqedsymbol{\ifmmode$\lrcorner$\else{\unskip\nobreak\hfil
\penalty50\hskip1em\null\nobreak\hfil$\lrcorner$
arfillskip=0pt\finalhyphendemerits=0\endgraf}\fi}
\let\ge\geqslant
\let\leq\leqslant
\let\geq\geqslant
\title{A note on highly connected $K_{2,\ell}$-minor free graphs\thanks{This work was supported by ANR project GrR (ANR-18-CE40-0032)}}
\author[1]{Nicolas Bousquet}
\author[1]{Théo Pierron}
\author[2]{Alexandra Wesolek\thanks{The author is supported by the Vanier Canada Scholarship Program.}}
\affil[1]{Univ. Lyon, Université Lyon 1, CNRS, LIRIS UMR CNRS 5205, F-69621, Lyon, France}
\affil[2]{Department of Mathematics, Simon Fraser University, Burnaby, BC, Canada.}
\date{}
\begin{document}

\maketitle

\begin{abstract}
    We show that every $3$-connected $K_{2,\ell}$-minor free graph with minimum degree at least $4$ has maximum degree at most $7\ell$. As a consequence, we show that every 3-connected $K_{2,\ell}$-minor free graph with minimum degree at least $5$ and no twins of degree $5$ has bounded size. Our proofs use Steiner trees and nested cuts; in particular, they do not rely on Ding's characterization of $K_{2,\ell}$-minor free graphs.
\end{abstract}

\section{Introduction}
Over the last decades minor-free graph classes received considerable attention. Many computationally difficult problems become tractable on minor-free graph classes, especially if the excluded minor is planar. One of the simplest planar graphs is the complete bipartite graph $K_{2,\ell}$. A first step to understanding the difficulty of a problem is to consider it on the class of $K_{2,\ell}$-minor free graphs. The structure of $K_{2,\ell}$-minor free graphs has been widely studied, see e.g.~\cite{ChudnovskyRS11,Ding17+,EllinghamMOT16}. 

An important graph class parameter is the edge density. One can easily notice that a $K_{2,\ell}$-minor free graph $G$ may contain $\Omega(\ell \cdot n)$ edges since the disjoint union of $K_{\ell+1}$ is $K_{2,\ell}$-minor free. A similar example can be easily found for connected graphs. Chudnovsky, Reed and Seymour proved in~\cite{ChudnovskyRS11} that it is essentially tight since $K_{2,\ell}$-minor free graphs have at most $\frac{1}{2} (\ell + 1)(n - 1)$ edges, and they showed that this bound can be reached (proving a conjecture of Myers~\cite{Myers03}).

Ding~\cite{Ding17+} proposed a decomposition theorem for $K_{2,\ell}$-minor-free graphs (so far unpublished). As a corollary, he deduced that all the $5$-connected $K_{2,\ell}$-minor free graphs have bounded size, as well as $3$-connected $K_{2,\ell}$-minor free graphs of minimum degree at least $6$.  One can remark that these bounds are tight because of the strong product of an $n$-vertex cycle with an edge, which is $4$-connected and $5$-regular. Ding also obtained as a corollary that $3$-connected graphs with minimum degree $6$ have bounded maximum degree.

We reprove these results in a slightly stronger sense with simple self-contained proofs.  Namely, we show the following. Recall that two vertices are \emph{twins} if they have the same closed neighborhood (in particular, they are adjacent).

\begin{theorem}\label{thm:main}
All $3$-connected $K_{2,\ell}$-minor free graphs with minimum degree $5$ containing no twins of degree $5$ have bounded size.
\end{theorem}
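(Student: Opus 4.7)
My plan is to bootstrap from the maximum degree bound proved earlier in the paper: every $3$-connected $K_{2,\ell}$-minor free graph $G$ with $\delta(G)\ge 4$ satisfies $\Delta(G)\le 7\ell$. Combined with a (yet to be established) bound on the diameter of $G$, this immediately yields $|V(G)|\le (7\ell+1)^{\diam(G)+1}$, and hence the theorem. So the task reduces to showing that a $3$-connected $K_{2,\ell}$-minor free graph with $\delta(G)\ge 5$ and no twins of degree $5$ has diameter bounded by a function of $\ell$.

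I would argue by contradiction, assuming $\diam(G)$ is arbitrarily large. Let $u,v$ be two vertices at maximum distance and $P=u=x_0,x_1,\ldots,x_d=v$ a shortest $u$-$v$ path. Since $P$ is a geodesic, each internal vertex has at most two neighbors on $P$, hence at least three off-$P$ neighbors because $\delta(G)\ge 5$. The crucial step is to extract a $K_{2,\ell}$-minor from this very long path together with its off-path neighborhood, contradicting the minor-free hypothesis.

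To that end, I would use $3$-connectivity to construct a system of nested $3$-vertex cuts crossing $P$, yielding a laminar decomposition of $V(G)\setminus P$ into ``blocks'' between consecutive cuts. Each block can be represented by a Steiner tree spanning its bounding-cut vertices together with its interior off-path neighbors. Over a sufficiently long window of $P$, a pigeonhole argument on the combinatorial types of these Steiner trees should expose two ``hub'' vertices $a,b$ together with $\ell$ pairwise disjoint connected subgraphs each joined to both hubs, certifying a $K_{2,\ell}$-minor.

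The main obstacle I anticipate is controlling the contribution of degree-$5$ vertices inside a block: a degree-$5$ vertex whose entire neighborhood sits in the two bounding cuts (plus $P$) can force the local structure to be rigid and block the minor construction. This is precisely where the no-twins-of-degree-$5$ hypothesis intervenes: two such rigid degree-$5$ vertices within the same block would share their closed neighborhoods and hence be twins, which is forbidden. Therefore each block has enough ``slack'' to route the required branch sets, and the pigeonhole step yielding the $K_{2,\ell}$-minor goes through.
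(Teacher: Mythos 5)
Your high-level skeleton matches the paper: bound $\Delta$ by $7\ell$ via the earlier lemma, so a large graph forces a long geodesic; then extract a $K_{2,\ell}$-minor from a long, narrow region using nested cuts and pigeonhole, with the no-twins-of-degree-$5$ hypothesis ruling out a rigid local configuration. You even correctly anticipate \emph{why} the twin hypothesis is needed (two degree-$5$ vertices whose closed neighborhoods are forced to coincide). However, there is a concrete gap in your central step, and the remaining steps are too vague to carry the argument.

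The gap: you write that $3$-connectivity lets you ``construct a system of nested $3$-vertex cuts crossing $P$.'' This is not available. $3$-connectivity only gives $\eta(u,v)\ge 3$; the local connectivity between $u$ and $v$ may well exceed $3$ (it is bounded above only by $\Delta\le 7\ell$), in which case no $3$-cut separates them. The paper's proof spends real effort on exactly this: it takes BFS levels $V_i$ from $s$, picks minimal $st$-cuts $C_i\subseteq V_i$, and then \emph{maximizes} the integer $k\le\ell$ such that some window of length $\ge 2(d+1)^{\ell-k}$ has $\eta(C_i,C_j)\ge k$. If $k=\ell$ the minor is immediate; otherwise the maximality of $k$ forces the connectivity between many sub-cuts to be \emph{exactly} $k$, producing a long $2$-nested sequence of $k$-cuts for a fixed $k$ that can be anywhere between $3$ and $\ell-1$. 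Your plan silently assumes $k=3$, which may fail, and this is not a cosmetic point: the bound $\ell\binom{\eta}{2}$ on the number of required cuts and the pigeonhole in the key lemma both depend on $k=\eta$ varying.

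The vagueness: your ``pigeonhole on combinatorial types of Steiner trees'' is not precise enough to see that it terminates in a $K_{2,\ell}$-minor. The paper's Lemma~\ref{cl:nestedkcuts_3conn} replaces the Steiner-tree idea (which the paper uses only in the max-degree proof) by $\eta$ internally disjoint $ST$-paths; each section between consecutive cuts is classified into three cases; cases (2) and (3) are shown impossible by rerouting arguments (case (3) is where the twin-free hypothesis kills a degree-$5$/degree-$5$ twin pair); and the pigeonhole is then over the $\binom{\eta}{2}$ pairs of paths, giving $\ell$ internally disjoint connecting paths between a single pair and hence the $K_{2,\ell}$-minor. Your description does not identify the branch sets of the minor or the counting that makes the pigeonhole close, so as written it does not constitute a proof, though its intuition is aligned with the paper's.
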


 Observe that $5$-connected graphs, and $3$-connected graphs with minimum degree at least~$6$ fall in the scope of the theorem. Many practical problems such as Maximum Independent Set and Minimum Dominating Set can be reduced to twin-free graphs, which motivates their study. The theorem is tight because of the family of twin-free $4$-connected and $4$-regular graphs depicted in Figure~\ref{fig:neckK4}. The graphs in the family consist of two cycles $v_1 -v_2-\dots-v_n- v_1$ and $w_1 - w_2-\dots -w_n-w_1$ such that $v_i$ is connected to $w_i,w_{i+1}$ where $w_{n+1}:=w_1$. These graphs are indeed $K_{2,5}$-minor free. Indeed, suppose $A_1, A_2$ are two sets of edges whose contraction gives the two vertices of degree $5$ in $K_{2,5}$ and suppose $A_1,A_2$ are maximal. That means that $G\setminus A_1$ and $G\setminus A_2$ contain at most one component which is not a single vertex and $A_1,A_2$ induce a connected graph each.  Therefore $A_1,A_2$ induce a path or a cycle when we restrict them to the inner and outer cycles. But now, deleting the edges between these two sets yields a graph with a cut of size at most $4$ separating them. A $K_{2,5}$-minor would imply that there are $5$ vertex-disjoint paths between $A_1$ and $A_2$ with non-empty interior, which is not possible.
\begin{figure}[!ht]
\centering
\begin{tikzpicture}[thick, every node/.style={draw, circle}]
\node (a) at (90:1) {};
\node (b) at (135:1) {};
\node (c) at (180:1) {};
\node (d) at (225:1) {};
\node (e) at (270:1) {};
\node (f) at (315:1) {};
\node (g) at (0:1) {};
\node (h) at (45:1) {};
\node (aa) at (90:2) {};
\node (bb) at (135:2) {};
\node (cc) at (180:2) {};
\node (dd) at (225:2) {};
\node (ee) at (270:2) {};
\node (ff) at (315:2) {};
\node (gg) at (0:2) {};
\node (hh) at (45:2) {};
\draw (a) -- (b) -- (c) -- (d) -- (e) -- (f) -- (g) -- (h);
\draw[dotted] (h) -- (a) -- (hh) -- (aa);
\draw (a) -- (aa) -- (bb) -- (cc) -- (dd) -- (ee) -- (ff) -- (gg) -- (hh);
\draw (aa) -- (b) -- (bb) -- (c) -- (cc) -- (d) -- (dd) -- (e) -- (ee) -- (f) -- (ff) -- (g) -- (gg) -- (h) -- (hh);
\end{tikzpicture}
\caption{An infinite family of twin-free $4$-connected and $4$-regular graphs without $K_{2,5}$-minor.}
\label{fig:neckK4}
\end{figure}
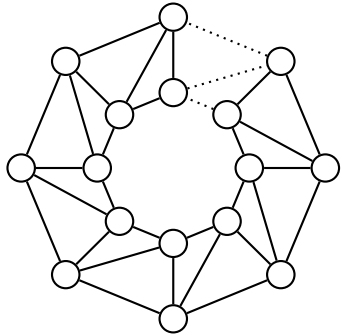

Our proof of Theorem~\ref{thm:main} consists of two parts. The first, of independent interest, consists in proving that $3$-connected $K_{2,\ell}$-minor free graphs of minimum degree $4$ have maximum degree $O(\ell)$ (it strengthens a result of Ding~\cite{Ding17+} who only proved it for minimum degree $6$). The second consists in constructing a $K_{2,\ell}$-minor when there are too many cuts between two given vertices.

Chudnovsky, Reed and Seymour gave in~\cite{ChudnovskyRS11} the sketch of a proof that $3$-connected $K_{2,\ell}$-minor free graphs contain at most $\frac 52 n + c(\ell)$ edges for a large constant $c$ (not explicit). Their technique, based on an adaptation of a tree decomposition argument of~\cite{oporowski1993typical} is completely different from ours. Even if weaker, we can easily derive from our proof that $3$-connected $K_{2,\ell}$-minor free graphs with minimum degree $5$ have at most $\frac 52 n+c(\ell)$ edges.

\section{Maximum degree}

Let $s,t$ be two vertices of the graph $G$. An \emph{$st$-cut} is a subset of vertices $C$ not containing $s$ nor $t$ and whose deletion separates $s$ from $t$. We say that $C$ is a \emph{$k$-cut} if $|C|=k$ and we denote by $\eta(s,t)$ the \emph{$st$-connectivity}, that is the minimum size of an $st$-cut. 
In~\cite{Ding17+}, Ding proved that $K_{2,\ell}$-minor free graphs that are $4$-connected or $3$-connected with minimum degree $6$ have bounded degree. We slightly improve these results (with a short proof):

\begin{lemma}
\label{lem:degree}
Every $3$-connected $K_{2,\ell}$-minor free graph $G$ with minimum degree $4$ has maximum degree at most $7\ell$.
\end{lemma}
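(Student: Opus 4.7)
\noindent\emph{Plan.}
I argue by contradiction: suppose $v$ has degree $d \geq 7\ell + 1$, and I aim to construct a $K_{2,\ell}$-minor in $G$. Exhibiting such a minor amounts to finding disjoint connected branch sets $A, B, C_1, \dots, C_\ell$ with each $C_i$ having an edge to both $A$ and $B$. I always take $A = \{v\}$, so each $C_i$ only needs to contain a neighbor of $v$. The problem thus reduces to producing a connected set $B \subseteq V(G) \setminus \{v\}$ together with $\ell$ disjoint connected sets $C_i \subseteq V(G) \setminus (\{v\} \cup B)$, each containing a vertex of $N(v)$ and having an edge to $B$.

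The starting point is a minimum Steiner tree $T$ for $N(v)$ in $G - v$, which exists because the $3$-connectedness of $G$ ensures that $G - v$ is connected; by minimality, every leaf of $T$ lies in $N(v)$. The easy case is when $T$ has at least $\ell$ leaves: take $B$ to be the set of internal vertices of $T$ (a subtree of $T$ as soon as $T$ has any internal vertex), so that each component of $T - B$ is a single leaf of $T$, hence a single vertex of $N(v)$; any $\ell$ of these components serve as $C_1, \dots, C_\ell$. So I may assume $T$ has fewer than $\ell$ leaves, in which case the set $M$ of internal $N(v)$-vertices of $T$ satisfies $|M| > 6\ell$.

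In this hard case $T$ is ``path-like'', with most neighbors of $v$ sitting as degree-$2$ internal vertices on long degree-$2$ paths of $T$. Here I intend to use two extra inputs. First, $\delta(G) \geq 4$ implies that each $m \in M$ has $T$-degree $2$ but $\deg_{G - v}(m) \geq 3$, so $m$ carries at least one ``extra'' edge in $G - v$ beyond its two tree-neighbors; second, $G - v$ is $2$-connected, and Menger's theorem therefore supplies alternative paths around the backbone of $T$. The plan is then to split into two sub-cases: either some vertex $u \in V(G) \setminus V(T)$ admits $\ell$ internally disjoint paths in $G - v$ to distinct vertices of $N(v)$ (in which case $B = \{u\}$ works), or all minimum $u$-to-$N(v)$ cuts in $G - v$ are small and form a nested family around the backbone of $T$, which in turn partitions $N(v)$ into at least $\ell$ pockets; combining these pockets with the backbone will then provide $B$ and the $C_i$'s. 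The main obstacle I anticipate is precisely this last step: the extra edges emanating from $M$-vertices can be either chords of $T$ or edges leaving $V(T)$, and a careful pigeonhole argument (almost certainly the source of the constant $7$ in $7\ell$) is needed to guarantee that these edges yield $\ell$ genuinely disjoint branch sets.
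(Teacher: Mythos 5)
Your easy case (a Steiner tree of $N(v)$ in $G-v$ with at least $\ell$ leaves yields the minor by contracting the internal vertices) is exactly the paper's, and your observation that each internal degree-$2$ vertex of $T$ lying in $N(v)$ carries a spare edge in $G-v$ is the right raw material. But two choices in your plan create a genuine gap.

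First, you take $T$ to be a \emph{minimum} Steiner tree. The paper instead takes a Steiner tree \emph{maximizing the number of leaves in $N(v)$}. This is not a cosmetic difference: the whole engine of the paper's hard case is a sequence of local re-routings (``if such-and-such path existed, I could re-route $T$ and gain a leaf'') whose contradictions all come from leaf-maximality. A minimum Steiner tree gives you leaves inside $N(v)$, but it gives you no contradiction when you discover a chord or an escape path, so the hard case stalls. Concretely, the paper classifies the degree-$2$ vertices of $N(v)$ on the $<2\ell$ bare paths of $T$ as even/odd, finds more than $\ell$ ``even'' vertices, and shows that for each even $u$ there is a path $Q_u$ from the interior of its local segment $P_{vw}$ to the rest of $T$; leaf-maximality forces every such $Q_u$ to land on a leaf of $T$, and since there are fewer than $\ell$ leaves two of them collide, allowing a final re-routing that gains a leaf — contradiction. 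None of these steps has an analogue for a merely size-minimum tree.

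Second, your fall-back (``either some $u\notin V(T)$ has $\ell$ internally disjoint paths to $N(v)$, or nested cuts around the backbone split $N(v)$ into $\ell$ pockets'') is not a dichotomy that obviously holds, and the nested-cut mechanism is the one the paper reserves for Theorem~1.1, where it needs minimum degree~$5$ \emph{and} no twins of degree~$5$. Here you only have minimum degree~$4$, so even if you set up nested cuts you would not be able to run the pocket argument; cases (2) and (3) in the paper's Lemma~3.1 rely precisely on those extra hypotheses. You would also need to explain why a vertex $u\notin V(T)$ exists at all (the Steiner tree could span $G-v$), and why disjoint ``pockets'' can be turned into disjoint branch sets $C_i$ that each touch both $\{v\}$ and a connected $B$ disjoint from them. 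So the plan for the hard case, as stated, does not close. The fix is to switch to the leaf-maximal Steiner tree and replace the nested-cuts idea with re-routing arguments driven by that maximality.
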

\begin{proof}
Assume by contradiction that there exists a $3$-connected  $K_{2,\ell}$-minor free graph $G$ of minimum degree $4$ with a vertex $x$ of degree at least $7\ell$. Let $X=N(x)$ and $G'=G \setminus x$. Let $T$ be a Steiner tree of $X$ in $G'$ (a subtree of $G'$ spanning all vertices in $X$ with no leaf outside of $X$) maximizing the number of leaves $L$ in $X$. Note that if $T$ has at least $\ell$ leaves, then contracting all internal vertices of $T$ yields a $K_{2,\ell}$ minor in $G$. 

Therefore, we may assume that $T$ has at most $\ell-1$ leaves. Hence $T$ has less than $\ell$ branching vertices (that is vertices of degree at least $3$ in $T$). In particular, at least $6\ell$ vertices of $X$ have degree at most $2$ in $T$. They then belong to the set of paths of $T$ minus its branching vertices. Note that there are less than $2\ell$ such paths since $T$ has less than $\ell$ branching vertices and at most $\ell-1$ leaves. For each such path containing the vertices $u_1,u_2,\ldots,u_s$ of $X$ in that order, we say that $u_i$ is a \emph{strictly internal} vertex if $i \notin \{ 1,s \}$, and that it is \emph{even} (resp. \emph{odd}) if furthermore $i$ is even (resp. odd). Observe that on each of the less than $2\ell$ paths there are at most two non-strictly internal vertices and the number $e$ of even vertices is at least the number $o$ of odd vertices. To summarize, $X$ contains at most $\ell-1$ branching vertices, at most $2\times 2\ell$ non-strictly internal vertices and $e+o$ strictly internal vertices, hence we have $7\ell \leqslant |X|\leqslant \ell-1+o+e+2\cdot 2\ell\leqslant 5\ell-1+2e$ so $e>\ell$.

Let $u$ be an even vertex, and denote by $v,w$ the previous and next vertices of $X$ on the path containing $u$ in $T$ minus its branching vertices. For $a,b$ in $T$, we denote by $P_{ab}$ the subpath of $T$ connecting $a$ and $b$.

First observe that there is no path $Q$ from an internal vertex $v'$ of $P_{uv}$ to $T \setminus (P_{uv} \cup L)$ whose internal vertices are in $V(G') \setminus T$. Indeed, otherwise, we can replace either $P_{uv'}$ or $P_{vv'}$ by $Q$ in $T$. This yields another Steiner tree of $X$ in $G'$ with one more leaf than $T$ (namely $u$ or $v$), a contradiction. 

\begin{claim}
  There exists a path $Q_u$ from a vertex of $P_{vw}\setminus\{v,w\}$ to a vertex of $T\setminus P_{vw}$ whose internal vertices are in $V(G')\setminus T$.\end{claim}

\begin{proof}

Assume first that $Q:=P_{uv} \setminus \{ u,v\}$ contains a vertex. Since $G$ is $3$-connected and $x$ is not adjacent to any vertex of $Q$, $\{u,v \}$ is not a cut and then there exists a path $R$ from a vertex $z$ of $Q$ to $T \setminus P_{uv}$ whose internal vertices are in $V(G') \setminus T$. 
If $R$ has its other endpoint in $P_{uw} \setminus \{ u \}$, then adding $R$ and deleting the edges of $P_{zu}$ leaves a new Steiner tree with more leaves, a contradiction. So $R$ has its other endpoint in $T \setminus P_{vw}$ which completes the proof in that case.

So we can assume that $vuw$ is an induced $P_3$. Since $G$ has minimum degree at least $4$, $u$ has a neighbor $u' \notin\{ v,w\}$ not in $G'$. If $u' \in T$, the conclusion follows. So $u' \notin T$ and let $C$ be the connected component of $u'$ in $V(G') \setminus T$. Note that $C$ is not adjacent to $x$. Therefore $C$ must be adjacent to a vertex $t$ of $T \setminus u$ otherwise $u$ would be a cut vertex. If $t \notin\{v,w\}$, the conclusion follows. Note that $C$ cannot be adjacent to exactly one of $\{v,w\}$, otherwise $\{u,v\}$ or $\{u,w\}$  would be a $2$-cut. So $C$ is adjacent to both $v$ and $w$, and then adding to $T$ a tree connecting $v,w,u$ in $C$ and deleting the edges $uv$ and $uw$ yields a Steiner tree with an additional leaf, again a contradiction. 

 \end{proof}

Now, adding $Q_u$ to $T$ creates a cycle containing either $v$ or $w$, say $v$ by symmetry. Now we remove the edges of this cycle on $P_{uv}$, and the result is a Steiner tree of $X$ in $G'$ where $v$ is a leaf. By maximality of $T$, this means that the endpoint of $Q_u$ in $T$ must be a leaf. 

Since there are at least $\ell$ even vertices and at most $\ell-1$ leaves in $T$, at least two paths $Q_u$ and $Q_{u'}$ must end on the same leaf of $T$. Define $v$ (resp. $v'$) as the vertex of $X$ preceding or following $u$ (resp. $u'$) in the path containing $u$ (resp. $u'$) in $T$ minus its internal vertices, such that $v$ (resp. $v'$) lies in the unique cycle of $T\cup Q_u$ (resp. $T\cup Q_{u'}$).

Now removing the edges of $P_{uv}$ and $P_{u'v'}$ from $T$ and adding a spanning tree of $Q_u\cup Q_{u'}\cup R$, where $R$ is the shortest subpath of $P_{vw}$ containing $u$ and the endpoints of $Q_u$ and $Q_{u'}$ in $P_{vw}$, yields a Steiner tree where $v$ and $v'$ are leaves, hence it has one more leaf than $T$, a contradiction.
\end{proof}

Note that both hypotheses are required, as shown by the following examples with arbitrary large maximum degree:
\begin{itemize}
\item Wheels have minimum degree $3$ and the central vertex has arbitrary large degree. Wheels are $3$-connected and $K_{2,4}$-minor-free.
\item Starting from an even wheel $u,u_1,\ldots,u_{2n}$, and, for each $i\in[1,n]$, adding three vertices $x_i,y_i,z_i$ such that $\{u_{2i-1},u_{2i},x_i,y_i,z_i\}$ induces a $K_5$ yields a $2$-connected graph where $u$ has arbitrary high degree. This graph has minimum degree $4$ and is $K_{2,8}$-minor-free. Indeed, if it contains a $K_{2,8}$-minor, let $C$ and $C'$ be connected subsets of vertices that are contracted to create the vertices of degree $8$. Without loss of generality assume that $C$ does not contain $u$. Since $C$ is connected, it induces a path on the cycle $u_1,\dots,u_{2n}$ or contains the cycle (and note that $C$ contains a vertex from the cycle, otherwise there is a cut of size at most 2 separating $C$ from $C'$).  Observe further that for every $i\in[1,n]$, we can assume that $C$ contains either all of $\{x_i,y_i,z_i\}$ or none of them. Moreover, since $C$ is connected, there are at most two indices $i$ and $j$ such that $C$ contains only one vertex in $\{u_{2i-1},u_{2i}\}$ and in $\{u_{2j-1},u_{2j}\}$. Now, when removing the edges between $C$ and $C'$, we get a graph with a cut of size at most $7$ (using vertices among $u,x_i,y_i,z_i,x_j,y_j,z_j$) separating $C$ from $C'$, which is impossible. (Note that this can be extended so that the graph has minimum degree $\delta$ and is $K_{2,2\delta}$-minor-free.)
\end{itemize}

\section{Proof of Theorem~\ref{thm:main}}

Let $s,t$ be two vertices of the graph $G$. For every $st$-cut $C$, we denote by $S_C$ the connected component of $s$ in $G\setminus C$.
A sequence of cuts is \emph{nested} if for every two cuts $C,C'$ in the sequence, 
$S_C \subseteq S_{C'}$ or $S_{C'} \subseteq S_C$. Finally, we say a sequence of cuts is \emph{$2$-nested} if the cuts are nested and the distance between any two cuts is at least $2$, i.e. they are vertex disjoint and there is no edge between distinct cuts.
All these notions can be extended when $s$ and $t$ are replaced by sets of vertices. Note that in particular we consider that if $S,T$ are disjoint sets of vertices, an $ST$-cut is disjoint from $S\cup T$.
The proof of Theorem~\ref{thm:main} relies on the following lemma, which shows that if we have a large enough $2$-nested sequence of $st$-cuts, then $G$ admits a $K_{2,\ell}$-minor.

\begin{lemma}
\label{cl:nestedkcuts_3conn}
    Suppose that $G$ is $3$-connected, has minimum degree $5$ and no twins of degree~$5$. Let $S,T$ be two subsets of vertices inducing connected subgraphs of $G$. If $G$ contains a $2$-nested sequence of more than $\ell \cdot {\eta(S,T)\choose 2}$ $ST$-cuts of size $\eta(S,T)$ then $G$ admits a $K_{2,\ell}$-minor.
\end{lemma}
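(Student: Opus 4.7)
The plan is to start by applying Menger's theorem to fix $k:=\eta(S,T)$ pairwise vertex-disjoint $ST$-paths $P_1,\dots,P_k$; since every cut $C_i$ in the sequence has size exactly $k=\eta(S,T)$, each path meets each cut in a single vertex, giving a bijection $v_{i,j}:=C_i\cap P_j$. I will look at each of the $m-1\geq \ell\binom{k}{2}$ slabs $R_i$ (the strict interior sitting between consecutive cuts $C_i$ and $C_{i+1}$), and try to label each such slab by an unordered pair $\{a,b\}\subseteq\{1,\dots,k\}$ together with a \emph{gadget}: a non-empty connected subgraph $X\subseteq R_i\setminus(V(P_a)\cup V(P_b))$ with at least one neighbor in $V(P_a)$ and at least one in $V(P_b)$. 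There are only $\binom{k}{2}$ possible labels, so pigeonhole on the slabs will yield $\ell$ slabs that share a common label $\{a,b\}$, together with their corresponding gadgets $X_{i_1},\dots,X_{i_\ell}$.

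From these $\ell$ gadgets I build a $K_{2,\ell}$-minor by taking $A:=V(P_a)$, $B:=V(P_b)$, and the gadgets as the remaining branch sets. Since the gadgets live in pairwise disjoint strict slab interiors and avoid $V(P_a)\cup V(P_b)$, the branch sets are pairwise disjoint; each is connected ($A$ and $B$ are paths, and each $X_{i_t}$ is a gadget); and each $X_{i_t}$ has, by definition, neighbors in both $V(P_a)$ and $V(P_b)$. The adjacency structure of $K_{2,\ell}$ is then realised by the $2\ell$ edges between the gadgets and $A\cup B$.

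The technical heart of the argument is therefore the existence of a gadget in every slab. The easiest case is when some non-path vertex $u\in R_i$ has neighbors on two distinct paths $P_a,P_b$: then $\{u\}$ is a singleton gadget. Otherwise, every non-path vertex is adjacent to at most one path. Combining the minimum degree $5$ hypothesis (which forces every internal vertex of a segment $P_j\cap R_i$ to have at least three slab-internal neighbors beyond its two path neighbors) with $3$-connectedness applied to the pair $\{v_{i,a},v_{i+1,a}\}$ (whose removal cannot isolate the interior of $P_a\cap R_i$ from the rest of $G$), one locates either a connected cluster of non-path vertices whose boundary meets two distinct paths, or an internal vertex on some segment $P_c\cap R_i$ with $c\notin\{a,b\}$ that is adjacent both to $P_a$ and to $P_b$. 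Either of these directly yields a gadget.

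The hardest—and thus the main—obstacle is the \emph{matched} configuration, in which every cluster of non-path vertices is attached to a single path and the direct cross-path edges between distinct segments form a matching on $\{P_1,\dots,P_k\}$, no segment being cross-adjacent to two different other segments. In this degenerate situation none of the strategies above produces a gadget, and it is precisely here that the no-twin-of-degree-$5$ hypothesis must be used: the rigid local structure of a matched slab would collapse the endpoints of a cross-path edge into a pair of twins of degree $5$ unless an extra vertex is present in the slab that can be harnessed as a gadget. Carefully ruling out this last configuration is expected to be the most delicate part of the proof.
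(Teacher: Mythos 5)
Your overall architecture --- internally disjoint $ST$-paths, slab interiors between consecutive cuts, ``gadgets'' as connected pieces joining two paths with interior off those paths, and pigeonhole on the $\binom{\eta(S,T)}{2}$ pairs --- is the same as the paper's, and the minor-building step at the end is correct. The crucial missing ingredient is the \emph{extremal choice of the Menger system}. You fix $P_1,\dots,P_k$ up front and then try to prove that \emph{every} slab contains a gadget; this is false for an arbitrary path system. In a slab where every non-path component $C$ attaches to a single path $P_a$, your $3$-connectivity argument applied to $\{v_{i,a},v_{i+1,a}\}$ only forces an escape route for the interior of $P_a\cap R_i$ together with $C$, and this escape route can perfectly well be a single cross-path edge from a vertex \emph{on} $P_a$ to a vertex \emph{on} $P_c$ --- this has no interior and is not a gadget. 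Neither of the two outcomes you propose (a cluster meeting two paths, or an internal vertex of $P_c$ adjacent to both $P_a$ and $P_b$) is forced by $3$-connectivity alone.

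The paper resolves this by choosing $P_1,\dots,P_\eta$ to \emph{maximize} the number of gadget slabs (and, secondarily, the number of slabs with non-path vertices), and then derives a contradiction from maximality via re-routing. If a slab has no gadget but a non-path component $C$ attached to a single path $P_1$ at $v_1,\dots,v_r$ (with $r\ge 3$ by $3$-connectivity), one re-routes $P_1$ through $C$; $3$-connectivity applied to $\{v_1,v_r\}$ then produces a cross-path connection through some now-stranded $v_i$, turning that slab into a gadget slab and contradicting maximality. Your ``matched configuration'' corresponds to the paper's remaining case where the slab interior consists only of path vertices; the paper treats it with a second, subtler re-routing (swapping tails of $P_1$ and $P_2$) that eventually corners the configuration into a pair of adjacent degree-$5$ twins. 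Your instinct that the twin hypothesis enters precisely there is right, but you flag it without carrying it out, and --- more importantly --- the extremal choice of path system plus re-routing is needed already in the earlier case, so the ``hardest part'' is not the only hole in the sketch.
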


\begin{proof}
Let $\eta:=\eta(S,T)$, $d:=\ell \cdot {\eta\choose 2}$ and denote by $C_1,\ldots, C_d$ the $2$-nested sequence of $\eta$-cuts. Consider $\eta$ internally disjoint $ST$-paths $P_1,\ldots,P_{\eta}$ and write $P=P_1\cup\cdots\cup P_{\eta}$. For each $j\in[1,d-1]$, denote by $Y_j$ the set $S_{C_{j+1}}-(S_{C_j}\cup C_j)$ of vertices between the cuts $C_j$ and $C_{j+1}$. Observe that, for each $j\in[1,d-1]$, exactly one of the following may happen:
\begin{itemize}
    \item[(1)] for some $a\neq b$, there exists a path of length at least two between $P_a\cap (Y_j\cup C_j\cup C_{j+1})$ and $P_b\cap (Y_j\cup C_j\cup C_{j+1})$ with internal vertices in $Y_j\setminus (P_a\cup P_b)$.
    \item[(2)] $Y_j\setminus P$ is not empty and (1) does not hold. In particular, each connected component of $Y_j\setminus P$ is adjacent to exactly one path.
    \item[(3)] $Y_j\setminus P$ is empty and (1) does not hold.
\end{itemize} 
We choose $P_1,\ldots,P_\eta$ such that the number of times (1) holds is maximized, and with respect to that, the number of times (2) holds is maximized.  Fix $j\in[1,d-1]$.

If (2) holds, then there exists a component $C$ in $Y_j \setminus P$ which has only neighbours on some path, say $P_1$. Let $v_1,v_2,\dots, v_r$ be its neighbours on $P_1$ in order (note that $r\geqslant 3$ otherwise $\{v_1,v_2\}$ is a $2$-cut in $G$). Since $C$ is connected, there exists a path $P_C$ in $C$ from a neighbour of $v_1$ to a neighbour of $v_r$. We replace $P_1$ by the path $P'_1$ following $P_1$ up to $v_1$, then taking $P_C$, and following $P_1$ starting from $v_r$. Since $G$ is $3$-connected, deleting $v_1,v_r$ does not disconnect the graph. So there is a path from $C$ to some other path in $(Y_j\cup C_j\cup C_{j+1}) \setminus \{ v_1,v_r \}$, say $P_2$, and this path does not intersect $P_3,\dots,P_{\eta}$. Since $C$ does not have neighbors on any $P_i$ for $i>1$, this path must go through some $v_i$ with $i\notin\{1,r\}$. Now the component of $v_i$ in $Y_j\setminus (P_1'\cup P_2\cup\cdots\cup P_\eta)$ has neighbors both in $P'_1$ and $P_2$, hence satisfies (1), a contradiction with our choice of $P$. 

Assume now that (3) holds. Since the sequence is $2$-nested, $Y_j\cap P_1$ is not empty. Let $v$ be the first vertex of $P_1$ in $Y_j$. Note that $Y_j\cap P_1$ is induced, otherwise we can replace it by a shorter subpath and end up in case (1) or (2), a contradiction with our choice of $P$. Therefore, since $v$ has degree at least $5$, $v$ must have at least three neighbors outside of $P_1$. They should lie on exactly one other path since $Y_j\setminus P$ is empty and (1) does not hold, say $P_2$ by symmetry. Let $w$ be the first neighbor of $v$ in $P_2\cap Y_j$. Note that $w$ does not belong to $C_j$ nor $C_{j+1}$. Observe that $v, w$ have only neighbours on $P_1\cup P_2$, and denote by $w_1,\ldots,w_t$ the neighbors of $v$ in $P_2$ (following their order in $P_2$) and by $v_1,\ldots,v_s$ the neighbors of $w$ in $P_1$. Note that $s,t \ge 3$ and that $v\in\{v_1,v_2\}$ and $w\in\{w_1,w_2\}$ depending on whether $v_1$ or $w_1$ lie in $C_j$ or not. 

Suppose that for some $i>2$, $v_i\notin N[v]$. Let $P_1^{-v}$ be the subpath of $P_1$ where we delete all vertices after $v$ and let $P_1^{+v_i}$ be the subpath of $P_1$ where we delete all vertices before $v_i$. Similarly, let $P_2^{-w}$ be the subpath of $P_2$ where we delete all vertices after $w$ and let $P_2^{+w_t}$ be the subpath of $P_2$ where we delete all vertices before $w_t$. Now replacing $P_1$ by $P_1^{-v}P_2^{+w_t}$ and $P_2$ by $P_2^{-w}P_1^{+v_i}$ gives a new collection of $\eta$ internally disjoint paths from $S$ to $T$ where the total length of the paths in the section $j$ has been reduced. So (3) does not hold anymore and the number of sections satisfying (1) or (2) has increased, a contradiction with the choice of $P$.

Suppose now that $v_i\in N[v]$ and by symmetry $w_i \in N[w]$ for all $i>2$. Recall that $P_1$ is induced, hence $vv_i$ cannot be an edge for $i\geqslant 4$. Therefore, $s=3$ and $v=v_2$. By symmetry, we also get $t=3$ and $w=w_2$. Now, if $vv_1$ is not an edge, then we can again replace $P_1$ by $P_1^{-v_1}P_2^{+w}$ and $P_2$ by $P_2^{-w_1}P_1^{+v}$ and get a contradiction. Hence $vv_1$ is an edge and by symmetry $ww_1$ is an edge. Therefore, $v$ and $w$ have both degree $5$ and are twins, which violates the hypothesis on $G$.

Therefore, we may assume that $(1)$ holds for every $j\in[1,d-1]$. By the pigeonhole principle, there is $a,b\in[1,\eta]$ with $a\neq b$ and at least $\ell$ paths of length at least $2$ between $P_{a}$ and $P_{b}$ with internal vertices outside of $P_{a}\cup P_{b}$. Since these internal vertices lie in pairwise disjoint sets $Y_j$, they are internally disjoint. Therefore contracting the internal vertices of these paths into $\ell$ vertices and $P_a$ and $P_b$ into two vertices yields a $K_{2,\ell}$ minor in $G$. 

\end{proof}

We now conclude this section with the proof of Theorem~\ref{thm:main}, obtained by combining Lemmas~\ref{lem:degree} and~\ref{cl:nestedkcuts_3conn}.

\begin{proof}[Proof of Theorem \ref{thm:main}]
Let $d= \ell^3/2$, $n_\ell \geq (7\ell)^{2\cdot (d+1)^\ell}$ and $G$ be a graph on $n_\ell$ vertices satisfying the hypothesis of the theorem. We may assume that $G$ has maximum degree at most $7\ell$ for otherwise we get a $K_{2,\ell}$-minor by Lemma~\ref{lem:degree}. Therefore, there must be two vertices $s,t$ at distance at least $2\cdot (d+1)^\ell$ in $G$. Let us denote by $\eta(s,t)$ the $st$-connectivity.

Let $V_i$ be the vertices at distance $i$ to $s$ and let $C_i\subseteq V_i$ be an inclusion-wise minimal $st$-cut. In $V \setminus C_i$, denote by $S_i$ (resp. $T_i$) the set of vertices in the connected component of $s$ (resp. $t$). 

Let $k$ be the maximum integer such that $k\leqslant \ell$ and there exist $i>0$ and $j$ such that $j-i\geqslant 2\cdot (d+1)^{\ell-k}$ and $\eta(C_i,C_j)\geq k$. If $k=\ell$, we obtain a $K_{2,\ell}$-minor by contracting $S_i$ and $T_j$ into two vertices and contracting the internal vertices of each of the $\ell$ pairwise disjoint paths of length at least $2$ between $S_i$ and $T_j$ (and possibly deleting some edges).

Hence we can assume that $k\leq \ell-1$. Let $i,j$ be the corresponding indexes and let $d'=2\cdot(d+1)^{\ell-k-1}$. We consider the $d+1$ cuts $C_i, C_{i+d'}, C_{i+2d'}, \dots, C_{i+d\cdot d'}$ (which are pairwise at distance at least $d'$). By maximality of $k$, we cannot have $\eta(C_i,C_j)>k$, nor $\eta(C_{i+i'd'},C_{i+(i'+1)d'})>k$ for any $0  \leq i'\leq d-1$. Hence $\eta(V_{i+i'd'},V_{i+(i'+1)d'})=k=\eta(C_i,C_j)$ for each $i'\leq d-1$. But then there exists a cut of size $k$ between $C_{i+i'd'},C_{i+(i'+1)d'}$ for each $0 \leq i'\leq d-1$. These $d$ $k$-cuts between $C_i$ and $C_j$ are $2$-nested, hence by Lemma~\ref{cl:nestedkcuts_3conn}, $G$ then contains a $K_{2,\ell}$-minor, which concludes the proof. 
\end{proof}

One can easily adapt the proofs of Lemma~\ref{cl:nestedkcuts_3conn} and Theorem~\ref{thm:main} to show that every large enough $3$-connected graph $G$ with minimum degree $5$ either contains a $K_{2,\ell}$-minor or contains the $2\times 4$ king's graph\footnote{A $(k,\ell)$-king's graph is a square grid on $(k+1)\times(\ell+1)$ vertices with diagonals.} as an induced subgraph where the middle vertices have degree $5$ in $G$ (the current proof of Lemma~\ref{cl:nestedkcuts_3conn} only ensures the existence of a $2\times 3$ king's graph). Contracting the $K_4$ in the middle into a single edge to obtain a $2\times 3$ king's graph as in Figure \ref{fig:kingcontraction} keeps the minimum degree and the connectivity of the graph. Moreover, it removes two vertices and five edges from the graph. Therefore, applying this transformation while possible yields the following weakening of the result by Chudnovsky et al.~\cite{ChudnovskyRS11}:  every $3$-connected $K_{2,\ell}$-minor free graph of minimum degree $5$ contains at most $\frac 52 n +c(\ell)$ edges.
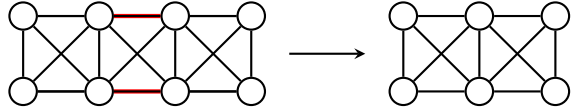
\begin{figure}[!ht]
    \centering
    \begin{tikzpicture}[thick, every node/.style={draw, circle}]
    \foreach \i in {1,2,3,4}
    {
 \node (a\i) at (\i,1) {};
 \node (b\i) at (\i,0) {};
 \draw (a\i)--(b\i);
    }
       \draw (a1)--(a2)--(a3)--(a4);
\draw (b1)--(b2)--(b3)--(b4);
  \foreach \i/\j in {1/2,2/3,3/4}
{
\draw  (a\i) --(b\j);
\draw  (b\i) --(a\j);
}
\draw[ultra thick,red] (a2)--(a3);
\draw[ultra thick,red] (b2)--(b3);
\draw [-stealth](4.5,0.5) -- (5.5,0.5);
    \foreach \i in {6,7,8}
    {
 \node (a\i) at (\i,1) {};
 \node (b\i) at (\i,0) {};
 \draw (a\i)--(b\i);
    }
       \draw (a1)--(a2)--(a3)--(a4);
\draw (b1)--(b2)--(b3)--(b4);
  \foreach \i/\j in {6/7,7/8}
{
\draw  (a\i) --(b\j);
\draw  (b\i) --(a\j);
\draw (a\i) -- (a\j);
\draw (b\i) -- (b\j);
}
    \end{tikzpicture}
    \caption{Contracting the red thick edges reduces a $2\times 4$ king's graph to a $2\times 3$ king's graph.}
    \label{fig:kingcontraction}
\end{figure}

\vspace{-.5cm}
\bibliographystyle{abbrv}

\end{document}